  \def\th@definition{
  \thm@headfont{\bfseries} 
  \thm@notefont{\bfseries} 
}
  \def\th@remark{
  \thm@headfont{\bfseries} 
  \thm@notefont{\bfseries} 
	}
\newcommand{\sA}{\mathcal{A}}
\newcommand{\sC}{\mathcal{C}}
\newcommand{\sF}{\mathcal{F}}
\newcommand{\sJ}{\mathcal{J}}
\newcommand{\sM}{\mathcal{M}}
\newcommand{\sO}{\mathcal{O}}
\newcommand\sY{{\mathcal Y}}
\newcommand{\bbP}{\mathbb{P}}
\newcommand{\bbQ}{\mathbb{Q}}
\newcommand{\bbZ}{\mathbb{Z}}
\theoremstyle{plain}
\newtheorem{theorem}{Theorem}
\newtheorem{subprop}{Proposition}[theorem]
\newtheorem{proposition}[equation]{Proposition}
\newtheorem{corollary}[equation]{Corollary}
\theoremstyle{remark}
\newtheorem{remark}[equation]{Remark}
\theoremstyle{definition}
\newtheorem{definition}[equation]{Definition}
\newtheorem{subdef}{Definition}[definition]
\newcommand{\bP}{\mathbb{P}}
\newcommand{\bQ}{\mathbb{Q}}
\newcommand{\bZ}{\mathbb{Z}}
\newcommand{\calC}{\mathcal{C}}
\newcommand{\calF}{\mathcal{F}}
\newcommand{\calO}{\mathcal{O}}
\newcommand{\calY}{\mathcal{Y}}
\newcommand{\Jac}{\mathrm{Jac}}
\newcommand{\Sing}{\mathrm{Sing}}
\newcommand{\rank}{\mathrm{rk}}
\newcommand{\Pic}{\mathrm{Pic}}
\newcommand{\git}{/\kern-0.2em/}
\newcommand{\codim}{\mathrm{codim}}
\newcommand{\prim}{\mathrm{prim}}
\subjclass[2020]{Primary:  	11G10, 14H70,  	14H40, 14K30. Secondary:  	14E05}
\author{Yajnaseni Dutta}
\address{Mathematisch Instituut, Universiteit Leiden, Gorlaeus Gebouw, Einsteinweg 55, 2333 CC, Leiden, NL}
\email{y.dutta@math.leidenuniv.nl}
\author{Lisa Marquand}
\address{Courant Institute of Mathematical Sciences, New York University, 251 Mercer St, NY, 10012}
\email{lisa.marquand@nyu.edu}
\title[Relative Compactified Prym and Picard]{Relative Compactified Prym and Picard fibrations associated to very good cubic fourfolds} 
\begin{document}
	\begin{abstract}
    A very good cubic fourfold is a smooth cubic fourfold that does not contain a plane, a cubic scroll, or a hyperplane section with a corank 3 singularity. We prove that the normalization of the relative compactified Prym variety associated to the universal family of hyperplanes of a very good cubic fourfold is in fact smooth, thereby extending prior results of Laza, Sacc\`a and Voisin. Using a similar argument, we also prove the smoothness of the normalization of the relative compactified Picard of the associated relative Fano variety of relative lines. 
\end{abstract}
	\maketitle

\subsection{The introduction} Let $X\subset \bP^5$ be a smooth cubic fourfold, and let $p\colon \calY\rightarrow B\coloneqq(\bP^5)^\vee$ denote the universal family of hyperplane sections. 
Since the intermediate Jacobian of a smooth cubic threefold is a (principally polarized) Abelian variety, one may wonder whether akin to the case of relative compactified Jacobian of curves, such a compactified intermediate Jacobian family exists over $B$. This was first discussed in \cite{DonMar}.

Let $U_1 \coloneqq B\setminus \Sing(X^{\vee})$ be the open subset parametrising hyperplane sections of $X$ with at most one node, and $\pi_{U_1}\colon J_{U_1}\rightarrow U_1$ the fibration obtained by Donagi and Markman \cite{DonMar} by considering a version of the relative intermediate Jacobians. 
The fibration $\pi_{U_1}\colon J_{U_1}\rightarrow U_1$ is, moreover, a Lagrangian fibration: i.e. there exists a unique holomorphic symplectic $2$-form $\sigma_{U_1}$ on $J_{U_1}$, such that $\sigma_{U_1}|_{J_{U_1,b}}=0$ for all $b\in U_1$. 
It is a natural question to ask whether there exists a hyperk\"ahler compactification $\overline{J}$ of $J_{U_1}$ admitting a Lagrangian fibration with irreducible fibers extending $\pi_{U_1}$. 
Recall that hyperk\"ahler manifolds are simply connected even dimensional smooth complex projective varieties that admit a nondegenerate global holomorphic 2-form. It is expected that every hyperk\"ahler manifold admits a deformation with a Lagrangian fibration. The (numerical) property characterizing Lagrangian fibrations induces countable dense subsets in the moduli space of hyperk\"ahler manifolds \cite{mat3, kam-ver, DIKM}. Such a fibration mimics many properties enjoyed by elliptic fibrations of minimal smooth projective surfaces. Furthermore, any non-trivial fibration of a hyperk\"ahler manifold is Lagrangian, i.e. restricted to smooth points of the fibers, the 2-form vanishes.

In \cite{LSV}, the authors construct a smooth hyperk\"ahler compactification $\pi\colon\overline{J}\rightarrow B$ of $J_{U_1}$ such that $\pi$ has irreducible fibers, under the assumption that $X$ is \textbf{general}. The hyperk\"ahler manifold $\overline{J}$ is deformation equivalent to an $OG10$ manifold \cite[Corollary 6.3]{LSV} - such a manifold is said to be of $OG10$-type.
 Their construction relies on passing to Mumford's Prym variety interpretation of the intermediate Jacobian of a smooth hyperplane section of $X$ and compactifying the Jacobian as the moduli of rank one torsion free sheaves.
Without the assumption of the generality of $X$, it is not clear whether their construction would result in a smooth compactification (see \cite[Section 3]{LSV}). 
We prove
\begin{theorem}\label{thm:main}
    Let $X\subset \bP^5$ be a very good cubic fourfold as in Definition \ref{def:vg}.
    Then there exists a smooth projective $OG10$-type hyperk\"ahler compactification $\overline{J}$ of $J_{U_1}$ with a Lagrangian fibration $\pi\colon \overline{J}\rightarrow B$ extending $\pi_{U_1}$, such that 
 the fibers of $\pi$ are compactified Prym varieties of reduced, irreducible planar curves. In particular, the fibers are irreducible.
\end{theorem}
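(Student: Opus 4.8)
The plan is to follow the conic-bundle construction of Mumford and Laza--Sacc\`a--Voisin \cite{LSV}, and to isolate the smoothness of the total space as the single place where the very good hypothesis is genuinely used. For a smooth cubic threefold $Y$ and a line $\ell\subset Y$, projection away from $\ell$ exhibits the blow-up $\mathrm{Bl}_\ell Y$ as a conic bundle over $\bP^2$ whose discriminant is a plane quintic $C$ carrying a natural connected \'etale double cover $\widetilde{C}\to C$, and Mumford's theorem identifies $J(Y)\cong\Prym(\widetilde{C}/C)$. Relativizing over the relative Fano of lines and descending to $B=(\bP^5)^\vee$, one obtains a family of double covers and realizes the relative compactified Prym $\overline{P}\to B$ as the Prym part of a relative moduli space of rank one torsion free sheaves on the curves $\widetilde{C}_b$; over $U_1$ this recovers $\pi_{U_1}\colon J_{U_1}\to U_1$, so any normal compactification of $\overline{P}$ automatically extends it. Setting $\overline{J}$ to be the normalization $\widetilde{\overline{P}}$, the theorem reduces to three points: (i) the curves $C_b$ are reduced, irreducible and planar, so the fibers are irreducible compactified Pryms; (ii) $\overline{J}$ is smooth; and (iii) $\overline{J}$ is an $OG10$-type holomorphic symplectic manifold with $\pi$ Lagrangian.

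For (i), I would run through the three excluded configurations geometrically. The fibers $Y_b=X\cap H_b$ are cubic threefolds whose singularities are governed by the very good hypothesis: excluding a corank $3$ singular hyperplane section bounds the corank of every singular point of every $Y_b$ by $2$, while excluding a plane and a cubic scroll prevents the associated discriminant from degenerating to a nonreduced or reducible quintic -- these are precisely the loci where the conic bundle acquires a whole plane of degenerate conics or splits off a rational component. The upshot is that each $C_b$ is an integral planar quintic with at worst the planar singularities dictated by corank $\le 2$ points of $Y_b$. Since the compactified Jacobian of an integral planar curve is irreducible, and the compactified Prym of a connected \'etale double cover of such a curve inherits this irreducibility, the fibers of $\pi$ are irreducible, as claimed.

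The heart of the matter, and the main obstacle, is (ii). Here I would invoke the principle that the total space of a relative compactified Jacobian over a base that \emph{versally} unfolds the planar singularities of its fibers is smooth; smoothness of $\overline{J}$ then becomes local on $B$ and reduces to checking that, at each $b$, the family $\{C_{b'}\}_{b'}$ induces a miniversal deformation of every singularity of $C_b$ (equivalently of $\widetilde C_b$). Concretely I would (a) tabulate the analytic singularity types of $C_b$ produced by the corank $\le 2$ points of $Y_b$, (b) compute the Tjurina numbers and compare them with the moduli available in the universal family of hyperplane sections, and (c) verify, using that $X$ is very good, that the Kodaira--Spencer map from $T_bB$ surjects onto the product of the versal deformation spaces of these singularities. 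This is exactly where the genericity hypothesis of \cite{LSV} is replaced by the weaker very good one, and where the bulk of the work lies. For the passage from Jacobian to Prym, the compactified Prym is the distinguished component cut out by the covering involution on the smooth normalized compactified Jacobian; since fixed loci of involutions on smooth varieties are smooth, the normalized compactified Prym inherits smoothness.

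Finally, for (iii) the holomorphic symplectic form $\sigma_{U_1}$ extends across the boundary $\pi^{-1}(\Sing(X^\vee))$, which has codimension $\ge 2$ in the now-smooth $\overline{J}$, to a global two form that one checks is everywhere nondegenerate, so $\overline{J}$ is an irreducible holomorphic symplectic manifold; projectivity comes from the moduli construction and simple connectedness from the analysis of \cite{LSV}. Since the very good cubics form a connected family containing the general ones and $\overline{J}$ varies in a family over this locus, deformation invariance of the deformation type together with \cite[Corollary 6.3]{LSV} identifies $\overline{J}$ as being of $OG10$-type. The fibration $\pi$ is Lagrangian because it agrees with the Lagrangian $\pi_{U_1}$ over the dense open $U_1$, and any nontrivial fibration of an irreducible holomorphic symplectic manifold is Lagrangian.
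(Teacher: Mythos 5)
Your steps (i) and (iii) are broadly consistent with how the paper proceeds, but step (ii) --- which you correctly identify as the heart of the matter --- contains a genuine gap: the versality strategy you propose is precisely the technique of \cite{LSV} that the paper explicitly states \emph{fails} for very good cubic fourfolds that are not general. For a general $X$, every hyperplane section has only ``allowable'' singularities in the sense of \cite[Definition 3.3]{LSV}, and the family of discriminant curves over the ($7$-dimensional) relative Fano of very good lines does induce versal deformations of the singularities of each $\widetilde C_b$, which is what makes the relative compactified Jacobian (and hence the Prym, via the involution argument you cite) smooth. But a merely very good $X$ can have hyperplane sections with non-allowable singularities (e.g.\ the $E_6$ case mentioned in the paper); the resulting plane quintics acquire singularities whose versal deformation spaces are too large for the Kodaira--Spencer map from the universal family to surject onto them, so your step (c) cannot be verified and in general is false. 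This is also why the paper works with the \emph{normalization} $\overline{J}$ of $\overline{P}$ at all: if versality held, $\overline{P}$ itself would already be smooth and no normalization would be needed.

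The paper's actual route around this obstruction is global rather than local. It first shows (Proposition \ref{prop: Qfact}) that the normalization $\overline{J}$ is a symplectic variety with $\bQ$-factorial terminal singularities: terminality comes from a codimension bound on the singular locus together with the extension of the symplectic form from the MMP model $M$ of \cite{sac2021birational, Sacca25} and Namikawa's results; $\bQ$-factoriality comes from a count of linearly independent Cartier divisors (Proposition \ref{prop: pic rank}), which in turn rests on identifying the group of sections of $\pi$ with ${\rm H}^{2,2}_{\prim}(X,\bZ)$ (Proposition \ref{prop: sections}, using \cite{DMS}) and comparing with the Picard rank $\rho(M)=\rank\,{\rm H}^{2,2}_{\prim}(X,\bZ)+2$. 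Smoothness then follows at one stroke from Greb--Lehn--Rollenske \cite[Proposition 6.5]{GLR}: a $\bQ$-factorial terminal symplectic variety birational to a smooth hyperk\"ahler manifold (here, $M$) is itself smooth. None of this machinery appears in your proposal, and without it (or some genuinely new replacement for versality) your argument does not close.
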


Let $Y$ be a hyperplane section of a cubic fourfold. Mumford's Prym construction for the intermediate Jacobian $J(Y)$ still holds for $Y$ mildly singular after a careful choice of line \cite[Section 2]{LSV} - such a line is called a very good line. We recall the definition:
\begin{definition}
    A line $\ell\subset Y$ in a cubic threefold $Y$ is \textbf{very good} if the associated conic bundle obtained by ${\rm Bl}_{\ell}Y\to \bbP^2$ has at worst nodal fibers and both the discriminant curve $C\subset \bbP^2$ and its \'etale double cover $\widetilde{C}$ are irreducible.
\end{definition}
Thus, for any smooth cubic fourfold $X$, a necessary condition for the relative Prym construction is the existence of such lines for every hyperplane section of $X$. This motivates our definition of a very good cubic fourfold:
\setcounter{definition}{2}
\begin{subdef}\label{def:vg}
    A \textbf{very good cubic fourfold} is a smooth cubic fourfold such that every hyperplane section $Y\coloneqq X\cap H$ has a very good line.
\end{subdef}

A hyperplane section of a general cubic fourfold, as shown in \cite[Section 2]{LSV}, will be mildly singular with allowable singularities in the sense of \cite[Definition 3.3]{LSV}, and have a very good line. 
The existence of very good lines in cubic threefolds with worse singularities was studied recently in \cite{marquand2024defect}. In particular, the authors show in \cite[Theorem 1.2, 1.3]{marquand2024defect} that if a cubic threefold $Y$ has no corank 3 singularities, then the existence of a very good line is equivalent to $Y$ not containing a plane or a rational normal cubic scroll.

    Recall that a cubic threefold with a corank 3 singularity at $[1:0:0:0:0]$ has equation $x_0x_1^2+g(x_1, x_2,x_3,x_4)=0$. The simplest example that can occur is an $\widetilde{E}_6$ singularity. In particular, the possible corank 3 singularities that can occur on cubic threefolds have Milnor number at least 8  \cite[Section 2]{V24}. A cubic threefold with a corank 3 singularity can never admit a very good line \cite[Remark 5.4]{marquand2024defect}, even if it does not contain a plane or a scroll. 
The locus of cubic fourfolds admitting such a hyperplane section has high codimension in the moduli of cubic fourfolds.

\begin{subdef}\label{def:vg2}
   A very good cubic fourfold can equivalently be defined as a smooth cubic fourfold not containing a plane, a cubic scroll, or a hyperplane section with a corank 3 singularity.
\end{subdef}

Under the assumption that $X$ is a very good cubic fourfold, in particular, that there is a very good line on every hyperplane, one can apply the relative Prym construction of \cite[Section 2]{LSV} to obtain a compactification $\overline{P}$ of $J_{U_1}$.
Unfortunately, the techniques of \cite{LSV} used to prove smoothness of $\overline{P}$  fail in this more general set up. 
Thus \Cref{thm:main} should be seen as an extension of that of \cite{LSV}, as it extends their construction to a likely wider subset of cubic fourfolds. 
Indeed, there exist smooth cubic fourfolds containing non-allowable singularities. The locus of cubic fourfolds admitting a $E_6$ singularity is now being investigated in an on-going joint work of the L.M.\ with Viktorova, where they show that a general such cubic fourfold has a unique algebraic surface class induced by the hyperplane section and hence is very good according to Definition \ref{def:vg2}.

In \cite{sac2021birational}, Sacc\`a proves the existence of a smooth hyperk\"ahler Lagrangian compactification for \textbf{any} smooth cubic $X$ via MMP techniques, at the expense of losing the explicit geometry of special fibers; 
for example, whether they are irreducible or not, is not clear. 
\cref{thm:main}, among other things, ensures such a compactification with integral fibers.

Conversely, if $X$ contains a plane or a cubic scroll, any smooth hyperk\"ahler compactification $\pi\colon\overline{J}\rightarrow B$ will have reducible fibers \cite[Theorem 1.2]{marquand2024defect},  \cite[Corollary 1.8]{Brosnan}, \cite[Remark 3.12]{sac2021birational}. 
Therefore, our result in fact shows the following.
\setcounter{corollary}{1}
\begin{corollary}
    Let $X$ be any smooth cubic fourfold, such that none of its hyperplane sections have a corank three singularity. Then a smooth hyperk\"ahler Lagrangian compactification with integral fibers of the relative intermediate Jacobian $J_{U_1}\to U_1$ exists if and only if $X$ is a very good cubic fourfold.
\end{corollary}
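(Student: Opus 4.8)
The plan is to establish the two implications of the equivalence separately, after first reducing very goodness to its projective-geometric characterization in \Cref{def:vg2}. The key preliminary observation is that the standing hypothesis --- that no hyperplane section of $X$ carries a corank three singularity --- is precisely what permits this reduction: by \cite[Theorem 1.2, 1.3]{marquand2024defect}, for a cubic threefold without corank three singularities the existence of a very good line is equivalent to the absence of a plane or a rational normal cubic scroll. Since every hyperplane section of $X$ is excluded from having a corank three singularity by assumption, $X$ is very good (in the sense of \Cref{def:vg}) if and only if $X$ contains neither a plane nor a cubic scroll, that is, if and only if the condition of \Cref{def:vg2} holds.

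For the implication $(\Leftarrow)$, I would simply invoke \Cref{thm:main}: if $X$ is very good, it produces a smooth projective $OG10$-type hyperk\"ahler compactification $\pi\colon\overline{J}\to B$ extending $\pi_{U_1}$, whose fibers are compactified Prym varieties of reduced, irreducible planar curves and are in particular integral. This is exactly the desired compactification.

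For the implication $(\Rightarrow)$, I would argue by contraposition. Suppose $X$ is not very good; by the reduction above, $X$ then contains a plane or a cubic scroll. In either case the defect and monodromy analysis of \cite[Theorem 1.2]{marquand2024defect}, combined with \cite[Corollary 1.8]{Brosnan} and \cite[Remark 3.12]{sac2021birational}, forces every smooth hyperk\"ahler compactification $\pi\colon\overline{J}\to B$ of $J_{U_1}$ to acquire at least one reducible fiber. Consequently no smooth hyperk\"ahler Lagrangian compactification with integral fibers can exist, which is the contrapositive of what is wanted.

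The statement is thus a formal consequence of \Cref{thm:main} together with the cited converse results, and I expect the only delicate point to be the reduction step via \Cref{def:vg2}. This is where the no-corank-three hypothesis is indispensable: in the presence of a corank three singularity a very good line never exists \cite[Remark 5.4]{marquand2024defect}, even when no plane or scroll is present, so \Cref{def:vg} and \Cref{def:vg2} would no longer agree and the clean equivalence of the corollary would break down.
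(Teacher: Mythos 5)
Your proposal is correct and takes essentially the same route as the paper: the forward direction is exactly \Cref{thm:main}, the converse is the cited results forcing reducible fibers whenever $X$ contains a plane or a cubic scroll, and the no-corank-three hypothesis enters precisely as you say, via \cite[Theorems 1.2, 1.3]{marquand2024defect}, to identify failure of very goodness with the presence of a plane or scroll.
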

Note that since cubic threefolds with corank 3 singularities cannot have very good lines, our result, in particular the Prym construction, does not apply to smooth cubic fourfolds containing such hyperplane sections.

The next result is the crucial technical input that goes into proving \Cref{thm:main}, which is interesting in its own right.
\setcounter{theorem}{3}
\begin{subprop}\label{prop: Qfact}
    Let $X$ be a very good cubic fourfold. 
    Let $\overline{P}\to B$ be the relative compactified Prym associated to the universal hyperplane sections of $X$.
    The normalization $\overline{J}$ of $\overline{P}$ is a symplectic variety and has $\bQ$-factorial terminal singularities.
\end{subprop}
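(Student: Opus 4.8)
The plan is to prove Proposition \ref{prop: Qfact} by establishing the two assertions — that $\overline{J}$ is a symplectic variety and that it has $\bQ$-factorial terminal singularities — through a local analysis of the singularities of $\overline{P}$ along the discriminant locus, combined with general structural results for moduli spaces of sheaves on the relative intermediate Jacobian fibration. Since $\overline{P}$ is constructed fiberwise as a compactified Prym variety realized inside a moduli space of rank-one torsion-free sheaves on the discriminant curves $\widetilde{C}_b$, the key is to understand how the fiber degenerates as $b$ ranges over the stratification of $B$ by the singularity type of the hyperplane section $Y_b = X \cap H_b$.

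The strategy I would follow has the following steps. First, I would record that the symplectic form $\sigma_{U_1}$ on $J_{U_1}$ extends to a holomorphic symplectic form on the smooth locus of $\overline{J}$; this is essentially the content of the Lagrangian-fibration property combined with the Hartogs-type extension across the high-codimension discriminant, so that by definition $\overline{J}$ is a \emph{symplectic variety} in the sense of Beauville provided its singularities are canonical. Second, and this is the heart of the matter, I would carry out an explicit local model computation: near a point of a singular fiber, the compactified Prym is étale-locally modeled on a (product of a smooth factor with a) moduli space of sheaves on the planar curve $\widetilde{C}_b$, and the singularities of such moduli spaces are controlled by the local type of the planar singularities of the curve. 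Because $X$ is very good, the discriminant curve $C_b$ and its cover $\widetilde{C}_b$ are irreducible with at worst nodal fibers for the associated conic bundle, so the relevant curve singularities are mild (planar, with controlled $\delta$-invariant), and the normalization resolves exactly the non-normal behavior coming from the double cover while leaving the sheaf-theoretic singularities intact. Third, I would invoke the by-now-standard results (as in the local structure theory used in \cite{LSV} and subsequent work on $OG10$-type Lagrangian fibrations) identifying such sheaf-moduli singularities as analytically isomorphic to products of symplectic singularities with known canonical/terminal and $\bQ$-factoriality properties, and then deduce terminality from the explicit resolution combined with a discrepancy computation, and $\bQ$-factoriality from the fact that these local models are $\bQ$-factorial.

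The \textbf{main obstacle} I anticipate is precisely the point where the techniques of \cite{LSV} break down: in the general (non-very-good) setting one does not control the codimension and type of the locus where the fibers become non-reduced or reducible, and it is exactly here that smoothness fails. The very good hypothesis is designed to guarantee irreducibility of $\widetilde{C}_b$ and the nodality of the conic bundle, but I expect the real work is to verify that along the most degenerate strata — where $Y_b$ acquires its worst (but still corank $\le 2$) singularities — the local model of $\overline{P}$ (and hence of its normalization) is genuinely of the expected symplectic type with terminal $\bQ$-factorial singularities, rather than exhibiting a worse, non-terminal or non-$\bQ$-factorial degeneration. Concretely, the hard part will be the discrepancy and $\bQ$-factoriality verification on the local models associated to the non-allowable singularities that \cite{LSV} explicitly excluded, which presumably requires a careful blow-up/resolution analysis of the compactified Prym near those fibers and a check that the normalization does not introduce divisorial components that would violate terminality. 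I would therefore structure the proof so that the bulk of the argument is this local singularity classification, with the global symplectic-form extension and the passage from local to global properties handled as comparatively routine consequences.
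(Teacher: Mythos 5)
There is a genuine gap, and it lies at the heart of your plan: you propose to deduce $\bQ$-factoriality ``from the fact that these local models are $\bQ$-factorial.'' But $\bQ$-factoriality is not an analytic- or \'etale-local property, so no local singularity classification, however careful, can establish it. For instance, an ordinary double point of a threefold (or the analogous symplectic quotient-type singularities appearing in sheaf-moduli local models) is never $\bQ$-factorial analytically locally, and yet a projective variety with such points may or may not be $\bQ$-factorial depending on whether the local Weil divisors globalize; conversely, analytic-local $\bQ$-factoriality at every point would not by itself be what one checks from a local model. Your proposal contains no global input that could bridge this, and this is precisely why the paper does \emph{not} argue locally: the local sheaf-theoretic analysis is the technique of \cite{LSV}, which the paper explicitly says fails in this generality. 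The paper's replacement is global: \Cref{prop: sections} identifies the group of sections of $\pi\colon\overline{P}\to B$ with ${\rm H}^{2,2}_{\prim}(X,\bZ)$ (via the Hodge-module results of \cite{DMS}), translations by these sections applied to an ample line bundle produce $\rank\,{\rm H}^{2,2}_{\prim}(X,\bZ)+2$ linearly independent Cartier divisors (\Cref{prop: pic rank}), and this number is compared with $\rho(M')=\rho(M)=\rank\,{\rm H}^{2,2}_{\prim}(X,\bZ)+2$ for a symplectic resolution $\mu\colon M'\to\overline{J}$ birational to the MMP model \cite{Sacca25, sac2021birational}. Then for any Weil divisor $D$, the class $\mu^*D$ is a combination of pullbacks of Cartier divisors from $\overline{J}$, and since ${\rm Cl}(\overline{J}^{\rm sm})\simeq{\rm Cl}(\overline{J})$ (the singular locus having codimension $\geq 4$), the relation descends and $D$ is already Cartier. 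None of this has a counterpart in your outline.

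Two further points. First, terminality in the paper requires no discrepancy computation on explicit resolutions: one shows $\codim_{\overline{P}}(\Sing(\overline{P}))\geq 3$ using that $\overline{J}_{U_1}$ and the group scheme $P$ of fiberwise regular points lie in the regular locus together with equidimensionality of $\pi$, and then invokes Namikawa \cite{Nam}, by which the singular locus of a symplectic variety has no codimension-$3$ components and terminality is equivalent to codimension $\geq 4$. Second, your extension of the symplectic form by a ``Hartogs-type'' argument is insufficient as stated: Hartogs would extend the $2$-form across the codimension-$\geq 2$ complement of $\overline{J}_{U_1}$, but gives no control on non-degeneracy of the extension. The paper instead observes that the regular locus of $\overline{P}$ embeds in the smooth hyperk\"ahler MMP model $M$ and restricts the non-degenerate form from $M$. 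So while your first assertion (symplecticity) could likely be repaired along the paper's lines, the $\bQ$-factoriality argument is not a matter of filling in details: it needs the section-counting mechanism (or some equivalent global input), which is the paper's main new contribution.
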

\Cref{thm:main} follows from \Cref{prop: Qfact} by applying a result of Greb--Lehn--Rollenske \cite[Proposition 6.5]{GLR}, that states that any terminal $\bQ$-factorial symplectic variety that is birational to a smooth hyperk\"ahler manifold is smooth itself. Indeed, $\overline{J}$ is birational to a smooth hyperk\"ahler manifold $M$, constructed using desingularising $\overline{P}$ and running a relative MMP, as was done in \cite{Sacca25} (later we call this the MMP model).
The last claims of \Cref{thm:main} follow from the construction of $\overline{P}.$ 

One can prove a variant of \Cref{thm:main}  by replacing the relative compactified Prym with a compactification of the scheme of relative degree 0 line bundles on the relative Fano variety of lines of the universal hyperplane section. This is given in \Cref{cor: pic} below. To prove this, we have a result analogous to \Cref{prop: Qfact}:
\setcounter{proposition}{2}
\begin{subprop}\label{prop: Qfactfano}
    Let $X$ be a very good cubic fourfold. 
    Let $f\colon \overline{\Pic}^0_{\sF/B}\to B$ be the  Picard model as described in \S \ref{sec:models}, where $\sF\to B$ is the relative Fano variety of lines associated to the universal hyperplane sections of $X$.
    The normalization of $\overline{\Pic}^0_{\sF/B}$ is a symplectic variety and has $\bQ$-factorial terminal singularities. 
\end{subprop}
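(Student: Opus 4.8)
The plan is to establish \Cref{prop: Qfactfano} by running the proof of \Cref{prop: Qfact} in parallel, with the Prym model replaced throughout by the Picard model. Write $\overline{\calP}\coloneqq\overline{\Pic}^0_{\sF/B}$, let $\nu\colon N\to\overline{\calP}$ be its normalization and $f_N\colon N\to B$ the induced fibration; the aim is to prove that $N$ is a symplectic variety with $\bQ$-factorial terminal singularities. The first step is to locate the symplectic form on the regular locus. Over $U_1$ each section $Y_b$ has at worst one node, so $F(Y_b)$ is smooth and, by Clemens--Griffiths, $\Pic^0(F(Y_b))$ is canonically the intermediate Jacobian $J(Y_b)$; hence $\overline{\calP}|_{U_1}\cong J_{U_1}$ carries the Donagi--Markman form $\sigma_{U_1}$. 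Since $B\setminus U_1=\Sing(X^\vee)$ has codimension at least $2$ in $B$ and the fibers of $f_N$ are everywhere $5$-dimensional, the complement of $N|_{U_1}$ in $N$ has codimension at least $2$, so $\sigma_{U_1}$ extends by Hartogs to a holomorphic $2$-form on $N_{\mathrm{reg}}$; its nondegeneracy along the boundary fibers I would verify as in \Cref{prop: Qfact}.

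The technical heart is the local analysis of $N$ over the discriminant. By the very good hypothesis and the construction of \S\ref{sec:models}, each fiber of $f_N$ is an integral compactified Picard, and I would match its local structure to the one already controlled in the Prym case: via the incidence correspondence relating $F(Y_b)$ to the discriminant curve $C_b$ of a very good line, the compactified Picard is \'etale-locally a product of a smooth factor with the compactified Jacobian of the planar singularities of $C_b$. Such compactified Jacobians are local complete intersections, hence Gorenstein, and the key point---shared with \Cref{prop: Qfact}---is that their normalizations have rational singularities. Granting this, Namikawa's criterion (a normal variety carrying a symplectic form on its regular locus is a symplectic variety precisely when it has rational singularities) shows that $N$ is symplectic; equivalently, one checks that $\sigma_{U_1}$ extends holomorphically to the resolution furnished by the MMP model.

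It remains to prove terminality and $\bQ$-factoriality. For terminality I would use that, $N$ being symplectic, its singular locus is a union of even-codimensional symplectic strata (Kaledin), so it is enough to exclude a codimension-$2$ stratum. This follows from the same dimension count as in \Cref{prop: Qfact}: the very good assumption---no plane, no cubic scroll and no corank-$3$ hyperplane section (\Cref{def:vg2})---confines the loci over which $C_b$ degenerates beyond nodes to codimension at least $4$ in $N$, giving $\codim_N\Sing(N)\ge 4$ and hence terminal singularities. For $\bQ$-factoriality I would argue directly from the moduli description of the Picard model rather than try to inherit it from $\overline{J}$, since $\bQ$-factoriality is not preserved under the (small) birational comparison between the two models.

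The step I expect to be the main obstacle is the local identification of the second paragraph. In the Prym model the fibers are literally compactified Jacobians of the planar discriminant curves, whereas here they are compactified Picards of the possibly singular Fano \emph{surfaces} $F(Y_b)$; making the reduction to planar-curve compactified Jacobians precise, and checking that normalization is compatible with this \'etale-local model, is the delicate point on which the symplecticity and the codimension estimate both rest. A secondary difficulty is the $\bQ$-factoriality, which must be extracted from the construction in \S\ref{sec:models} rather than transported from \Cref{prop: Qfact}.
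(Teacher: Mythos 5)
There is a genuine gap, and it sits exactly where the paper's real content lies: $\bQ$-factoriality. Your proposal reduces this step to the sentence that you ``would argue directly from the moduli description of the Picard model,'' which is a placeholder, not an argument. You are right that $\bQ$-factoriality does not transport across small birational maps, but the paper never tries to inherit it from $\overline{J}$; instead it reruns the section-counting mechanism of \Cref{prop: pic rank} for the Picard model. The essential input is \Cref{prop:hm}, proved Hodge-theoretically via \cite{DMS}: the group of sections of $f\colon \overline{\Pic}^0_{\sF/B}\to B$ is isomorphic to ${\rm H}^{2,2}_{\prim}(X,\bZ)$. Translating an ample line bundle $L$ by these sections, and adding $L$ and $f^*\sO_B(1)$, produces $\rank({\rm H}^{2,2}_{\prim}(X,\bZ))+2$ linearly independent Cartier divisors; since a symplectic resolution of the normalization is birational to the MMP model $M$, whose Picard rank is exactly $\rank({\rm H}^{2,2}_{\prim}(X,\bZ))+2$, and since ${\rm Cl}$ of the normalization agrees with ${\rm Cl}$ of its smooth locus (codimension $\geq 4$ of the singular locus), every Weil divisor is forced to be Cartier. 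Without \Cref{prop:hm} or a substitute for it, your proof of $\bQ$-factoriality simply does not exist, and nothing in your ``moduli description'' sketch supplies the needed count of Cartier divisors.

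The other components of your plan are also shakier than the paper's, and in one place unnecessarily so. Your ``technical heart''---that the compactified Picard of the possibly singular Fano surfaces is \'etale-locally a product of a smooth factor with a compactified Jacobian of a planar curve, that these are l.c.i., and that their normalizations have rational singularities---is an unproven and delicate chain of claims, which you yourself flag as the main obstacle. The paper avoids all of it: it cites \cite[Proposition 4.8]{Bottini} to get that the smooth group scheme $\Pic^0_{\sF/B}$ has complement of codimension $2$ in $\overline{\Pic}^0_{\sF/B}$ (valid for any very good cubic fourfold), which together with $\codim_B(B\setminus U_1)\geq 2$ gives the codimension bound on the singular locus; and the symplectic form on the regular locus is obtained by restriction from the MMP model constructed out of the Picard model (\Cref{rmk:MMPfromPicard}), not by a Hartogs extension whose nondegeneracy and extension to a resolution must then be checked by hand. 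Terminality then follows from Namikawa's criterion; your Kaledin-stratification route is fine in principle, but it rests on your unproven claim that the very good hypothesis confines the bad locus to codimension $\geq 4$, which in the paper is precisely what Bottini's result delivers.
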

 With the same argument as \cref{thm:main}, we obtain:

\setcounter{corollary}{3}
\begin{corollary}\label{cor: pic}
    The normalization of $\overline{\Pic}^0_{\sF/B}$ is a smooth hyperk\"ahler manifold birational to $\overline{J}$.
\end{corollary}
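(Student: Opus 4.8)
The plan is to mirror precisely the deduction of \Cref{thm:main} from \Cref{prop: Qfact}, substituting \Cref{prop: Qfactfano} for \Cref{prop: Qfact}. Write $N$ for the normalization of $\overline{\Pic}^0_{\sF/B}$. By \Cref{prop: Qfactfano}, $N$ is a $\bQ$-factorial terminal symplectic variety, so by \cite[Proposition 6.5]{GLR} it is enough to produce a birational map from $N$ to a smooth hyperk\"ahler manifold. The natural target is the manifold $\overline{J}$ of \Cref{thm:main}, which is already known to be smooth; thus the crux is to show that $N$ and $\overline{J}$ are birational.

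To see this, I would identify a common dense open locus of $\overline{\Pic}^0_{\sF/B}$ and $\overline{P}$, namely the Donagi--Markman fibration $J_{U_1}$. Over $U_1$ the fiber $Y_b = X\cap H_b$ is an at worst one-nodal cubic threefold, and the relative degree-$0$ Picard of $\sF\to B$ restricts there to the relative $\Pic^0$ of the family of Fano varieties $F(Y_b)$. Since the Albanese of $F(Y_b)$ is canonically the intermediate Jacobian $J(Y_b)$, and $J(Y_b)$ is principally polarized and hence self-dual, one has $\Pic^0(F(Y_b))\cong J(Y_b)$, which is exactly the fiber of $\pi_{U_1}\colon J_{U_1}\to U_1$. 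Hence the construction of \S\ref{sec:models} recovers $J_{U_1}$ over $U_1$, so both $\overline{\Pic}^0_{\sF/B}$ and $\overline{P}$ are compactifications of the same quasi-projective family $J_{U_1}$. As normalization is a birational modification, $N$ is birational to $\overline{J}$.

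With the birational map in hand, \cite[Proposition 6.5]{GLR} applies: being $\bQ$-factorial terminal symplectic and birational to the smooth hyperk\"ahler $\overline{J}$, the variety $N$ is smooth. To upgrade smoothness to the full hyperk\"ahler statement I would use that $N$ and $\overline{J}$ are smooth projective and birational, so they share the same Hodge numbers $h^{p,0}$ and, by weak factorization, the same fundamental group; thus $N$ is simply connected with $h^{2,0}(N)=1$. Its holomorphic symplectic form---present because a smooth symplectic variety carries a nowhere-degenerate $2$-form on the whole (smooth) space---then spans $H^{2,0}(N)$, so $N$ is an irreducible holomorphic symplectic, i.e.\ hyperk\"ahler, manifold birational to $\overline{J}$, as claimed.

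I expect the single delicate point to be the identification over $U_1$: one must check that the compactified Picard of \S\ref{sec:models} genuinely contains $J_{U_1}$ as a dense open subset and that the resulting birational map to $\overline{J}$ is compatible with the symplectic forms, so that the $2$-forms match and $N$ inherits a symplectic---not merely holomorphic---$2$-form. Once this common open is pinned down, the remaining ingredients (the GLR smoothness criterion, birational invariance of $h^{p,0}$ and $\pi_1$, and the fact that normalization preserves the birational class) are formal.
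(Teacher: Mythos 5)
Your proof is correct and follows essentially the same route as the paper: combine \Cref{prop: Qfactfano} with \cite[Proposition 6.5]{GLR} and a birational identification of the Picard and Prym models over a dense open subset of $B$ --- the paper records exactly this identification over $U_1$ in \cref{rmk:MMPfromPicard} via the argument of \cite[Proposition 4.9]{Bottini}, which is the family version of your Albanese/self-duality observation. The only cosmetic differences are that you take the target of the birational map to be $\overline{J}$ itself (legitimate, since \Cref{thm:main} already gives its smoothness) rather than an MMP model, and that your closing worry about compatibility of symplectic forms is unnecessary: \cite[Proposition 6.5]{GLR} only requires that $N$ be a $\bQ$-factorial terminal symplectic variety (already supplied by \Cref{prop: Qfactfano}) birational to a smooth hyperk\"ahler manifold, with no matching condition between the two $2$-forms.
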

\setcounter{subsection}{1}

\begin{remark}
    In fact, when $X$ is a very good cubic fourfold, one can show that there is a unique smooth hyperk\"ahler compactification of $\pi_U\colon J_U\to U$. In particular, the models $\overline{J}, M$ and $\overline{\Pic}^0_{\calF/B}$ considered above are isomorphic as Lagrangian fibrations. Indeed, a recent result \cite[Theorem D]{DMS} shows that given such a compactification $\pi_M\colon M\to B$, there is an integral Hodge isometry: $${\rm H}^4_{\prim}(X,\bZ)(1)\oplus \langle \Theta_M, \pi_M^*\calO_B(1)\rangle\simeq {\rm H}^2(M,\bZ),$$
    where $\Theta_M$ is the relative theta divisor inducing a principal polarization on the smooth fibers of $\pi_M$. Similar isometries hold for $M$ replaced by any smooth hyperk\"ahler compactification of $\pi_{U}$.
   Then the isomorphisms of the models follow using the proof of \cite[Theorem 6.6]{MO}: Let $M_1$ and $M_2$ be two smooth hyperk\"ahler compactification of $\pi_U$, then there exists a Hodge isometry ${\rm H}^2(M_1,\bZ)\simeq {\rm H}^2(M_2,\bZ)$ 
    such that the class $\pi_{M_1}^*\sO_B(1)$ is sent to $\pi_{M_2}^*\sO_B(1)$, as well as the respective theta divisors $\Theta_{M_1}, \Theta_{M_2}$, being the closure of the same divisor on $J_U$, are sent to one another.
     If $M_1$ and $M_2$ were not isomorphic, as argued in \cite[Theorem 6.6]{MO}, a linear combination of $\Theta_{M_1}$ and $\pi_{M_1}^*\sO_B(1)$ will be ample on $M_1$, but not on $M_2$. They argue that the existence of such a class will imply that either $X$ is singular or not very good, contradicting our assumption on $X$.
\end{remark}
\subsection{The new input} 
Our main new input is \Cref{prop: sections}, and its analogue \cref{prop:hm} for relative $\Pic^0$ of the Fano variety. They use the results from \cite{DMS}. These results allow us to identify the sections of the relative compactified Prym $\overline{P}$ with the sections of a relative intermediate Jacobian sheaf $\sJ$. 
We use the sections to calculate the number of linearly independent Cartier divisors in $\overline{P}$ (see \cref{prop: pic rank}).
\subsection{The models} \label{sec:models}
We recall the construction of the currently available compactifications of $\pi_{U_1}:J_{U_1}\rightarrow B$, one constructed in \cite{LSV} and the other in \cite{sac2021birational}. The former has integral fibers but may not a-priori be smooth for any smooth cubic fourfold, whereas the latter is smooth for any cubic fourfold but may not a-priori have integral fibers. 
A-posteriori when $X$ is a very good cubic fourfold, the normalisation of the Prym model is isomorphic to the MMP model described below and, therefore, is smooth and has integral fibres.

Let $X\subset \bP^5$ be a very good cubic fourfold.
Let $\widetilde{p}\colon \calF\rightarrow B$ be the relative Fano variety of lines on $p:\calY\rightarrow B$, and let $\calF^0\subset \calF$ be the open set of very good lines.
By assumption, very good lines exist on $\calY_b$ for all $b\in B$, and thus the map $\calF^0\rightarrow B$ is smooth and surjective.
By considering the conic bundle associated to the blow up of $\calY_{\calF^0}$ along the universal very good line, and the double cover of the discriminant curve, we obtain families of irreducible curves $\widetilde{\sC}$ and $\sC$ over $\sF^0$ :
\[\begin{tikzcd}
    \widetilde{\calC}\arrow[rr, "g"]\arrow[dr]&&\calC\arrow[dl]\\
    & \calF^0&
\end{tikzcd},\]
such that over $\ell\in \sF^0$, the corresponding cover $\widetilde{\sC}_\ell\rightarrow \sC_\ell$ is an \'etale double cover of irreducible curves. 

We thus let $P_{\calF^0}={\rm Prym}(\widetilde{\calC},\calC)$ be the relative Prym variety and $\overline{P}_{\calF^0}=\overline{{\rm Prym}(\widetilde{\calC},\calC)}$ its natural compactification as considered in \cite{LSV}. Applying the descent argument \cite[\S 5]{LSV} to $\overline{P}_{\calF^0}\rightarrow \calF^0;$  one obtains
$$\pi\colon\overline{P}\rightarrow B$$
which compactifies $\pi_{U_1}$.
We call $\overline{P}$ \textbf{the Prym model}, which may a-priori be singular. 
By construction, $\overline{P}|_{U_1}$ provides a smooth fiberwise compactification of $J_{U_1}$. 
Note also that $\pi\colon\overline{P}\rightarrow B$ has irreducible fibers, and is equi-dimensional over the base by \cite[Prop 4.16]{LSV}.  

In \cite{sac2021birational}, Sacc\`a constructs a hyperk\"ahler compactification $M$ of $J_{U_1}$. We modify the construction for our purpose: we consider a desingularization of $\overline{P}$, and run the relative MMP outside the smooth locus given by $P\cup \overline{J}_{U_1}$,  where $P$ is the set of regular points along the fibers of $\pi$ and $\overline{J}_{U_1}$ is given by the compactified intermediate Jacobians of the at most 1-nodal hyperplane sections. Applying the arguments of \cite{Sacca25} (see also \cite[Section 1]{sac2021birational}), one proves that $M$ is smooth hyperk\"ahler manifold - we call $M$ an \textbf{MMP model.}

Finally, we will consider the \textbf{Picard model} associated to the family of Fano surfaces $\widetilde{p}\colon \sF\to B$, considered above. We let $\Pic^0_{\sF/B}$ be the scheme parametrising relative degree 0 line bundles. We denote by $\overline{\Pic}^0_{\sF/B}$ the closure of $\Pic^0_{\sF/B}$ in the projective scheme parameterising degree 0 torsion free sheaves of rank 1 . We consider $\overline{\Pic}^0_{\sF/B}$ with the reduced scheme structure. 
 There is an induced fibration $f\colon \overline{\Pic}^0_{\sF/B}\to B$; we call this the Picard model. 

 \begin{remark}\label{rmk:MMPfromPicard}
     Furthermore, we will see later in \cref{prop:hm} that $R^1\widetilde{p}_*\sO_{\sF}\simeq \Omega_B^1$. Since the fibers of $\widetilde{p}$ are integral \cite[Lemma 5.6]{marquand2024defect}, the sheaf $R^1\widetilde{p}_*\sO_{\sF}$ is the Lie algebra of the relative Picard scheme \cite[\S 8.4 Theorem 1(a)]{BLR90}. 
    Since $J_{U_1}$ is always smooth for any cubic fourfold, using the argument in \cite[Proposition 4.9]{Bottini} over $U_1$, one concludes that $\overline{\Pic}^0_{\sF/B}|_{_{U_1}}$ is isomorphic to $\overline{P}_{U_1}$. However, this argument does not extend to all of $B$, since we do not know whether $\overline{P}$ is smooth.
    Therefore an MMP model can also be constructed by considering a desingularization of $\overline{\Pic}^0_{\sF/B}$, and running the relative MMP outside the smooth locus given by $\Pic^0_{\sF/B}\cup \overline{\Pic}^0_{\sF/B}|_{_{U_1}}$.
    This model is birational to the MMP model constructed from the Prym model.
 \end{remark}

\subsection{The Prym model}
In order to prove \Cref{prop: Qfact}, we will first analyze the sections of the fibration $\pi\colon\overline{P}\rightarrow B$. When $\overline{P}$ is smooth, this was observed in \cite[Theorem 5.1]{sac2021birational}.

\begin{proposition}\label{prop: sections}
    The map $\pi\colon \overline{P}\rightarrow B$ has sections and the group of sections is isomorphic to the primitive cohomology ${\rm H}^{2,2}_{\prim}(X, \bZ)$ of the very good cubic $X$.
\end{proposition}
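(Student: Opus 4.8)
The plan is to prove the statement in two stages: first reduce the computation of the group of sections of $\pi$ to that of the global sections of the relative intermediate Jacobian sheaf $\sJ$, and then identify $H^0(B,\sJ)$ with ${\rm H}^{2,2}_{\prim}(X,\bZ)$ by realizing each section as a normal function attached to a primitive Hodge class. For the reduction, recall that over $U_1$ the fibration $\pi$ restricts to the smooth intermediate Jacobian fibration $J_{U_1}\to U_1$, whose holomorphic sections are precisely the sections of $\sJ|_{U_1}$. Since $B\setminus U_1$ has codimension $\ge 2$ and $\pi\colon\overline{P}\to B$ is proper with integral fibers \cite[Prop 4.16]{LSV}, every section of $\sJ$ over $U_1$ should extend uniquely across the discriminant (a Hartogs-type statement for the sheaf $\sJ$, which is defined on all of $B$), while conversely a section of $\pi$ restricts to one over $U_1$. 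The genuine content here --- that the extension remains a section of $\overline P$ rather than degenerating into the singular boundary --- is supplied by the identification of sections of $\overline P$ with sections of $\sJ$ coming from \cite{DMS}. The existence assertion then follows already from the zero section determined by the relative principal polarization (the theta divisor), and the set of sections acquires a group structure from fiberwise addition in the Prym varieties.

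Next I would construct a group homomorphism $\Phi\colon {\rm H}^{2,2}_{\prim}(X,\bZ)\to H^0(B,\sJ)$ via the normal function construction. Given $\alpha\in {\rm H}^{2,2}_{\prim}(X,\bZ)$ and a smooth hyperplane section $Y=X\cap H$ with inclusion $i\colon Y\hookrightarrow X$, primitivity means $\alpha\cup h=0$, where $h$ is the hyperplane class; by the projection formula $i_*(\alpha|_Y)=\alpha\cup h=0$ in ${\rm H}^6(X,\bZ)$, and since $i_*\colon {\rm H}^4(Y,\bZ)\cong\bZ\to {\rm H}^6(X,\bZ)\cong\bZ$ is injective (it sends the line class of $Y$ to that of $X$), the restriction $\alpha|_Y$ vanishes in ${\rm H}^4(Y,\bZ)$. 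Thus $\alpha|_Y$ is homologically trivial, its Hodge-theoretic Abel--Jacobi image $\mathrm{AJ}_Y(\alpha|_Y)\in J(Y)$ is defined, and under Mumford's identification of $J(Y)$ with the relevant Prym variety it lands in the corresponding Prym fiber. (When $\alpha$ is represented by an algebraic cycle --- which holds here since the integral Hodge conjecture is known for cubic fourfolds --- this agrees with the classical cycle-theoretic Abel--Jacobi map, but the extension-class description suffices even without this.) As $Y$ varies over $U_1$ these points assemble into a holomorphic normal function, i.e.\ a section of $\sJ|_{U_1}$, which extends to $H^0(B,\sJ)$ by the admissibility supplied in \cite{DMS}; additivity of $\mathrm{AJ}$ makes $\Phi$ a homomorphism.

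Finally I would show $\Phi$ is an isomorphism. For injectivity, if $\Phi(\alpha)=0$ then $\alpha|_Y$ is Abel--Jacobi trivial for all $Y$, and since the hyperplane sections sweep out $X$ and the variation of Hodge structure on $R^3p_*\bZ$ together with restriction detects primitive classes, one concludes $\alpha=0$. For surjectivity, I would attach to any $\nu\in H^0(B,\sJ)$ its topological invariant and identify the ambient group with ${\rm H}^4_{\prim}(X,\bZ)$ via the Leray spectral sequence for the universal hyperplane family together with the Lefschetz theory relating the vanishing cohomology of the threefolds $Y$ to ${\rm H}^4_{\prim}(X,\bZ)$; holomorphicity of $\nu$ then forces this class to be of Hodge type $(2,2)$, so that $\nu-\Phi([\nu])$ has trivial invariant and is a torsion section, which must vanish because the VHS carries no nonzero global flat section. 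The hard part will be exactly this surjectivity step: making precise the identification of the normal-function invariant with ${\rm H}^4_{\prim}(X,\bZ)(1)$ and the resulting Hodge-type constraint. This is where the computation of the second cohomology of the compactification in \cite{DMS}, yielding ${\rm H}^4_{\prim}(X,\bZ)(1)\oplus\langle\Theta,\pi^*\sO_B(1)\rangle$ whose Hodge classes are precisely ${\rm H}^{2,2}_{\prim}(X,\bZ)$ together with the trivial rank-two summand, does the essential work; a secondary difficulty is guaranteeing throughout that sections do not escape into the singular fibers of $\overline P$, which is controlled by their integrality and the sheaf-theoretic reformulation in terms of $\sJ$.
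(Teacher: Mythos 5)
There is a genuine gap, and it sits exactly at the step you labeled as "supplied by \cite{DMS}." The identification of geometric sections of the (possibly singular) fibration $\pi\colon\overline{P}\to B$ with sheaf-theoretic sections ${\rm H}^0(B,\sJ)$ is \emph{not} in \cite{DMS}: that paper computes Hodge-theoretic objects (the Jacobian/Albanese sheaf, its global sections, the second cohomology of a \emph{smooth} hyperk\"ahler compactification), but says nothing about $\overline{P}$ itself. Nor can this identification be obtained by the Hartogs-type argument you sketch: a section of a proper fibration over the complement of a codimension-two locus need not extend to a section (the closure of its graph can acquire positive-dimensional fibers over the missing locus), and over $B\setminus U_1$ the fibers of $\overline{P}$ are singular compactified Pryms with no a priori group structure, so there is no N\'eron-type mechanism available there either. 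In other words, producing an honest morphism $B\to\overline{P}$ from a normal function is precisely the hard content of the proposition, and your proposal assumes it. The paper's proof handles this geometrically rather than Hodge-theoretically: the locus $P\subset\overline{P}$ of fiberwise regular points is a group scheme over $B$ with a zero section coming from the Prym construction (the fixed locus of the involution on $\overline{\Jac}(\widetilde{\sC})$ contains the zero section); then, inside the MMP model $M$, every fiber of $\pi_M$ has a reduced irreducible component meeting this zero section, so by \cite{kim24} or \cite{Sacca25} the relative Albanese $A_{M/B}$ acts \emph{freely} on $M^{\rm sm}$, and since $A_{M/B}$ has connected fibers, $P$ is a trivial $A_{M/B}$-torsor, i.e.\ $P\cong A_{M/B}$ as schemes over $B$. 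Only after this identification do sections of $\pi$ become literally global sections of the sheaf $\sA_{M/B}$, to which \cite{DMS} (Corollaries 6.8 and 3.22) applies to give ${\rm H}^0(B,\sA_{M/B})\simeq{\rm H}^{2,2}_{\prim}(X,\bZ)$.

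Two secondary remarks. First, your second and third paragraphs (Abel--Jacobi normal functions, topological invariants, the Hodge-type constraint via the Leray spectral sequence) are essentially a sketch of the proof of the DMS results that the paper simply \emph{cites}; reproving them is harmless but does not close the gap above, and your surjectivity step again only outputs an admissible normal function, i.e.\ a sheaf section, leaving open the same problem of realizing it as a morphism into $\overline{P}$. Second, the existence of a section does not come from ``the relative principal polarization/theta divisor'' as you suggest; in the paper it comes from the construction of $\overline{P}_{\sF^0}$ as the component of the fixed locus of an involution on $\overline{\Jac}(\widetilde{\sC})$ containing the zero section, which then descends to $B$.
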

\begin{proof}
    Let $P_{\sF_0}$ denote the set of regular points along the fibers of $\overline{P}_{\sF_0}$. Since the families of curves $\widetilde{\sC}\to \sF_0$ and $\sC\to \sF_0$ have irreducible fibers, $P_{\sF_0}$ is a group scheme and by \cite[\S 9.1]{ACLS-standard} descends to a group scheme $P$ over $B$ given by the set of regular points along the fibers of of $\overline{P}$. Recall that $\overline{P}_{\sF_0}\subset \overline{\Jac}(\tilde{\calC})$ is constructed as the connected component of the fixed locus of an involution of $\overline{\Jac}(\tilde{\calC})$ that contains the zero section \cite[Definition 4.8]{LSV}. It follows that $\pi\colon\overline{P}\rightarrow B$ has a zero section passing through $P$.

    In order to compute the exact number of sections, 
   we use the action of the relative Albanese $A_{M/B}$ \cite{Markushevich, AriFed, AbashevaRogov, kim24, Sacca25, AbashevaSTII} associated to the MMP model $\pi_M\colon M\to B$. 
    This can be defined as the fiberwise connected neutral component of the relative automorphism space of $\pi_M$.
    By construction, we have $P\subset M$, and hence the Lagrangian fibration $\pi_M\colon M\to B$ admits a reduced and irreducible component containing $P$ on each fiber. 
    Thus, all fibers of $\pi_M$ meet the zero section.
    Using \cite[Theorem 4.2, Proposition 4.27]{kim24} or \cite[Proposition 5.10]{Sacca25}, we thus conclude that $A_{M/B}$ acts freely on the locus $M^{\rm sm}$ of regular points of $\pi_M$. 
    Since $A_{M/B}$ has connected fibers, the locus $P\subset M^{\rm sm}$ forms a trivial $A_{M/B}$-torsor, in particular, $A_{M/B}=P\subset M$.
    Let $\sA_{M/B}$ denote the relative Albanese sheaf given by the sheaf of analytic sections of $A_{M/B}$.
    We conclude that ${\rm H}^0(B, \sA_{M/B})$ computes the group of sections of $\pi\colon \overline{P}\to B$. 
    Finally, using \cite[Corollary 6.8, Corollary 3.22]{DMS} we obtain that ${\rm H}^0(B,\sA_{M/B})\simeq {\rm H}^{2,2}_{\prim}(X,\bbZ).$
\end{proof}

Next, we will use the sections of $\pi$ to construct enough $\bQ$-Cartier divisors on $\overline{P}.$
\begin{proposition}\label{prop: pic rank}
      
  The number of linearly independent Cartier divisors on $\overline{P}$ is at least $$\rank {\rm H}^{2,2}_{\prim}(X,\bZ)+2,$$
  where $\rank{\rm H}^{2,2}_{\prim}(X,\bbZ)$ denotes the rank of the algebraic primitive cohomology.
\end{proposition}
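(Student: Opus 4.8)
The plan is to exhibit $\rank {\rm H}^{2,2}_{\prim}(X,\bZ)+2$ explicit Cartier divisors on $\overline{P}$ and to prove their linear independence in $\Pic(\overline{P})\otimes\bQ$ by restriction to a general fiber, in the spirit of the Shioda--Tate/Mordell--Weil formula for elliptic fibrations. Two of the classes are ``universal'': the pullback $F:=\pi^*\sO_B(1)$, which is Cartier as the pullback of a line bundle, and the relative theta divisor $\Theta$, the closure in $\overline{P}$ of the fiberwise principal polarization on $J_{U_1}$. The remaining $\rank {\rm H}^{2,2}_{\prim}(X,\bZ)$ classes come from the sections produced in \Cref{prop: sections}.

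First I would record that the group scheme $P$ of regular points along the fibers (identified in \Cref{prop: sections} with the relative Albanese) acts on $\overline{P}$ by translations, extending the fiberwise action of the intermediate Jacobian on its compactification; this uses that the fibers of $\pi$ are reduced and irreducible. Each section $s$ of $\pi$ then determines an automorphism $t_s\colon\overline{P}\xrightarrow{\sim}\overline{P}$, so $t_s^*\Theta$ is again a Cartier divisor. For a basis $s_1,\dots,s_r$ of ${\rm H}^{2,2}_{\prim}(X,\bZ)\otimes\bQ$, with $r=\rank{\rm H}^{2,2}_{\prim}(X,\bZ)$, set $\phi(s_i):=t_{s_i}^*\Theta-\Theta\in\Pic(\overline{P})$. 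On a general smooth fiber $\overline{P}_b=J(Y_b)$ the theorem of the square shows that $\phi(s_i)|_b$ equals the image of $s_i(b)$ under the principal polarization isomorphism $\lambda\colon J(Y_b)\xrightarrow{\sim}\Pic^0(J(Y_b))$, and $\lambda$ is a group homomorphism.

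It then suffices to prove that $\Theta,\,F,\,\phi(s_1),\dots,\phi(s_r)$ are linearly independent in $\Pic(\overline{P})\otimes\bQ$. Suppose $b\Theta+cF+\sum_i a_i\phi(s_i)=0$ with integer coefficients. Restricting to a general fiber, $F$ and each $\phi(s_i)$ restrict into $\Pic^0(J(Y_b))$ while $\Theta|_b$ is the nonzero principal polarization class in $\NS(J(Y_b))$; hence $b=0$. The remaining relation restricts on each fiber to $\lambda\big((\sum_i a_i s_i)(b)\big)=0$, since $F|_b$ is trivial and $\lambda$ is a homomorphism; as $\lambda$ is an isomorphism this forces the section $\sum_i a_i s_i$ to vanish at every $b$, hence $a_i=0$ because ${\rm H}^{2,2}_{\prim}(X,\bZ)$ is torsion-free, and finally $c=0$ as $F$ is a nonzero pullback. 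This yields the stated $r+2$ independent Cartier classes.

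The main obstacle is the two Cartier-ness assertions on the possibly singular, a priori non-normal, space $\overline{P}$: that $\Theta$ is an honest line bundle rather than merely a Weil divisor, and that the translation action of $P$ extends to a regular action on all of $\overline{P}$. For the former I would use that $\Theta$ arises as a determinantal (tautological) line bundle on the moduli-of-sheaves description of $\overline{P}_{\sF^0}$ in \cite{LSV}, and descends by the construction in \cite[\S 5]{LSV}; for the latter I would invoke the standard fact that the relative Jacobian acts on the relative compactified Jacobian of a family of integral planar curves, which applies because the fibers are reduced and irreducible. Once these two points are secured, the restriction-to-fiber computation above is routine and gives the lower bound.
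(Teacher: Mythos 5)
Your argument has the same skeleton as the paper's proof: translate a divisor class by the sections of \cref{prop: sections}, restrict to a general fiber $A$ (an abelian variety), separate the N\'eron--Severi part from the $\Pic^0$ part via the theorem of the square, and use torsion-freeness of ${\rm H}^{2,2}_{\prim}(X,\bZ)$ to kill all coefficients. The fiberwise computation you describe is correct, and the point that the sections act on $\overline{P}$ by fiberwise tensoring (because $\overline{P}$ sits inside the relative compactified Jacobian $\overline{\Jac}(\widetilde{\sC})$ of a family of integral curves) is exactly the paper's justification as well. Forming the differences $t_{s_i}^*\Theta-\Theta$ rather than using the translates directly is immaterial.

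The one substantive difference is your choice of which divisor to translate, and this is where your proposal has a genuine gap. The paper takes an \emph{arbitrary ample line bundle} $L$ on the projective variety $\overline{P}$, so Cartier-ness is free, and the entire proof reduces to the restriction-to-$A$ computation (there $\phi_L\colon A\to \hat A$ has finite kernel, which is all the argument needs). You instead insist on the relative theta divisor $\Theta$, and on the possibly singular, a priori non-normal $\overline{P}$ --- which is precisely the regime this paper must handle --- it is not established that the closure of the fiberwise principal polarization is Cartier. Your sketched repair (a determinantal/tautological theta bundle on the moduli-of-sheaves description over $\sF^0$, then descent via \cite[\S 5]{LSV}) is plausible but leaves two nontrivial verifications undone: that the determinantal class agrees with the closure of the classical theta on the Prym component (including the degree/twist conventions, since theta naturally lives in degree $g-1$ while the Prym sits in degree $0$, and the Jacobian theta restricts to \emph{twice} the principal polarization on a Prym), and that this bundle carries a descent datum along $\sF^0\to B$. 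None of this is needed: substituting an ample $L$ for $\Theta$ in your own argument --- accepting that $L|_A$ is then a nonzero but non-principal polarization, so that $\phi_{L|_A}$ has finite rather than trivial kernel --- removes the only questionable step and recovers the paper's proof verbatim.
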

\begin{proof}
    The variety $\overline{P}$ is projective, so let $L$ be some ample line bundle on $\overline{P}.$ Recall that $\overline{P}\subset \overline{\Jac}(\tilde{\calC})$. A section $s\colon B\rightarrow \overline{P}$ thus acts on $\overline{P}$ by translations fiberwise, i.e, it acts by tensoring. We denote this by $t_s\colon\overline{P}\rightarrow \overline{P}$.

    Consider the set $\{t_s^*L\}_{s\in H^{2,2}_{\prim}(X,\bbZ)}$ of line bundles. 
    Let $s=\sum_ia_is_i$ such that $\{s_i\}$ is a basis of the free $\bbZ$-module $H^{2,2}_{\prim}(X,\bbZ)$. 
    We claim that $t_s^*L \simeq L$ if and only if $a_i = 0$. To see this, 
    let $A$ denote the generic fiber of $\pi\colon \overline{P}\rightarrow B$. Then $A$ is an abelian variety with $L|_A$ ample, and $t^*_sL|_A$ is the translation of $L|_A$ via the section. 
    The claim follows from noting that 
    the map $\phi\colon A\rightarrow \hat{A},$ defined by $s\mapsto t_s^*L|_A\otimes L|_A^{-1}$ has finite kernel and each $s\neq 0$ has infinite order. 
    Therefore, the set $\{t_{s_i}^*L\}$ of Cartier divisors on $\overline{P}$ is linearly independent.

    In addition to these, there is the ample line bundle $L$, as well as the line bundle $\pi^*\calO_B(1)$. Their linear independence from the set of line bundles above can be checked again by restricting to $A$ and using the theorem of the square \cite[\S 6, Corollary 4]{Mum}. 
    Indeed, if $L|_A\simeq t_{s}^*L|_A\otimes t_{s'}^*L|_A$, for $s,s'\in H^{2,2}_{\prim}(X,\bbZ)$, then by the theorem of square again, $t_{s+s'}^*L|_A \simeq \sO_A$, which is absurd.    
    Similarly, since $\pi^*\sO_B(1)|_A \simeq \sO_A$, if for $s,s'\in H^{2,2}_{\prim}(X,\bbZ)$, we had $\sO_A\simeq t_s^*L|_A\otimes t_{s'}^*L|_A$, then by the theorem of square we have $t_{s+s'}^*L|_A \otimes L|_A\simeq \sO_A$. Since $s+s'$ have infinite order this is not possible.
\end{proof}

Next, we prove that the normalization $\overline{J}$ of $\overline{P}$ is a symplectic variety and use \cref{prop: pic rank} above to conclude that $\overline{J}$ is also $\bbQ$-factorial. 
\begin{proof}[Proof of \Cref{prop: Qfact}]
    Let $\overline{J}\rightarrow \overline{P}$ be the normalization. 
    We claim that $\overline{J}$ is a symplectic variety in the sense of Namikawa \cite{Nam} and has terminal singularities. 
    To see the claim, first note that $\codim_{\overline{P}}(\Sing(\overline{P})) \geq 3.$
    This can be seen as follows:  the Prym model $\overline{P}$ contains $\overline{J}_{U_1}$, the relative compactified Jacobian of the at most 1-nodal hyperplane sections of $X$, which is contained in the regular locus. This has a complement of codimension 2. 
    Therefore, the singular points of $\overline{P}$ must lie above $B\setminus U_1$.
    However the regular locus of the Prym model $\overline{P}$ also contains a smooth open subset $P$, consisting of the regular points of the fibers of $\pi$. 
    Since by construction, $\pi\colon\overline{P}\rightarrow B$ is equi-dimensional (see \cite[Proposition 4.10]{LSV}), and complement of $P$ in $\overline{P}$ on each fiber is at most 1, and hence the codimension bound follows.
    Secondly, the non-degenerate holomorphic 2-form on $\overline{J}_{U_1}$ extends to the regular locus of $\overline{P}$ as a non-degenerate $2$-form.
    Indeed, by construction the regular locus of $\overline{P}$ is contained in the MMP model $M$. 
    One obtains the desired 2-form by restricting the non-degenerate 2-form from $M$ to the regular locus of $\overline{P}$. Finally, since $\overline{J}$ is normal, 
    the singularities are symplectic. It follows from  \cite[Theorem 1, Corollary 1]{Nam} that the singularities are terminal.

    It remains to argue that $\overline{J}$ is $\bbQ$-Cartier. Let $D$ be a Weil divisor on $\overline{J}$.  
    Since $\overline{J}$ is birational to $M$, by \cite[Proposition 4.3]{Sacca25} $\overline{J}$ admits a symplectic resolution $\mu\colon M'\rightarrow \overline{J}$ with $M'$ birational to $M$. Then $\mu^*D$ is a Cartier divisor on $M'$. Since a birational map between smooth hyperk\"ahler manifolds is an isomorphism in codimension 1, the Picard rank $\rho(M')$ equals $\rho(M)$. By \cite[Lemma 3.2]{sac2021birational}, we know that $\rho(M)=\rank({\rm H}^{2,2}_{\prim}(X, \bZ))+2$. 
    By \Cref{prop: pic rank}, we can find $(\rank H^{2,2}_{\prim}(X,\bbZ)+2)$-many linearly independent Cartier divisors in $\overline{J}$. 
    Therefore, $\Pic(M')$ is generated by $\mu^*L$, $\mu^*\pi^*\sO_B(1)$ and $\mu^*(t_s^*L)$, for $s\in H^{2,2}_{\prim}(B,\bbZ)$. 
    It follows that $\mu^*D$ is a linear combination of these Cartier divisors. This relation also holds over 
    the smooth locus $\overline{J}^{\rm sm}$.
    Since $\codim_{\overline{J}}(\mathrm{Sing}(\overline{J}))\geq 4$, we have an isomorphism of the divisor class groups ${\rm Cl}(\overline{J}^{\rm sm}) \simeq {\rm Cl}(\overline{J})$. Hence the linear relation of $D$ holds over $\overline{J}$, which makes $D$ already Cartier on $\overline{J}$. 
\end{proof}

\subsection{The Picard model}
We will need the following proposition identifying the sections of the fibration $f\colon \Pic^0_{\sF/B}\rightarrow B$. It may be proved exactly analogously to \cref{prop: sections}, however we give a different Hodge theoretic proof since it avoids going via any hyperk\"ahler models. 
For a quick introduction to the Hodge modules we need and their properties adapted to our set-up consult \cite[\S 5.1]{DMS}.

\begin{proposition}\label{prop:hm}
     The coherent sheaf of Lie algebras of the group scheme $\Pic^0_{\sF/B}$ is isomorphic to $\Omega_B^1$. Furthermore, the map $f\colon \overline{\Pic}^0_{\sF/B}\to B$ has sections and the group of sections is isomorphic to ${\rm H}^{2,2}_{\prim}(X,\bbZ)$. 
\end{proposition}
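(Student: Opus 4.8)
The plan is to establish the two assertions separately, beginning with the identification of the Lie algebra sheaf. By the theory of the Picard scheme, the Lie algebra of $\Pic^0_{\sF/B}$ is computed by $R^1\widetilde{p}_*\sO_{\sF}$ (as already flagged in \Cref{rmk:MMPfromPicard}, using that the fibers of $\widetilde{p}$ are integral by \cite[Lemma 5.6]{marquand2024defect} together with \cite[\S 8.4 Theorem 1(a)]{BLR90}). So the first task is to prove $R^1\widetilde{p}_*\sO_{\sF}\simeq \Omega_B^1$. The natural route is Hodge-theoretic: the relative Fano surfaces $\sF_b$ of the hyperplane sections $\sY_b$ carry an identification of their Hodge structures with pieces of the primitive cohomology of $X$, and the Abel--Jacobi / cylinder-map machinery identifies $\mathrm{H}^1(\sF_b,\sO_{\sF_b})$ with a Hodge piece of $\mathrm{H}^4_{\prim}(X)$. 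First I would invoke the relevant Hodge-module computation from \cite[\S 5.1]{DMS}: the variation of Hodge structure on the smooth locus $U_1$ extends, as a Hodge module on $B=(\bP^5)^\vee$, to an object whose underlying coherent data recover $\Omega_B^1$. The key input is that the infinitesimal variation of Hodge structure of the universal hyperplane section realizes the Gauss--Manin derivative as an isomorphism onto $\Omega_B^1$, reflecting the fact that $J_{U_1}\to U_1$ is a Lagrangian fibration whose period map is an immersion; globalizing this over all of $B$ via the extension properties of the Hodge module yields $R^1\widetilde{p}_*\sO_{\sF}\simeq \Omega_B^1$.

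For the second assertion, the section count, the cleanest strategy is to reduce to the sheaf of sections and compute its global sections via the same Hodge-module input. I would let $\sP ic$ denote the sheaf of analytic sections of the group scheme $\Pic^0_{\sF/B}$ (equivalently, the relative Albanese/Picard sheaf), fitting into an exponential-type short exact sequence whose kernel is a local system $R^1\widetilde{p}_*\bZ$ and whose quotient term is the coherent Lie algebra $R^1\widetilde{p}_*\sO_{\sF}\simeq\Omega_B^1$. Taking the long exact sequence in cohomology over $B$, the group of sections $\mathrm{H}^0(B,\sP ic)$ sits between $\mathrm{H}^0$ of the local system and $\mathrm{H}^0(B,\Omega_B^1)$. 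The plan is then to show $\mathrm{H}^0(B,\Omega_B^1)=0$ (standard, as $B=\bP^5$ has no global holomorphic one-forms), so that the sections are computed by the flat/topological part, namely the invariant classes in $R^1\widetilde{p}_*\bZ$, which by the Hodge-module description of \cite{DMS} are exactly the monodromy-invariant integral $(2,2)$-Hodge classes, i.e. $\mathrm{H}^{2,2}_{\prim}(X,\bZ)$. This mirrors the final step of \Cref{prop: sections}, where \cite[Corollary 6.8, Corollary 3.22]{DMS} supply precisely this identification; here I would apply the same corollaries directly to the Picard-type Hodge module rather than routing through the Albanese of an MMP model.

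The main obstacle, and the step demanding the most care, is the extension of the Hodge-theoretic identification across the discriminant $B\setminus U_1$. Over the smooth locus everything is a polarized variation of Hodge structure and the isomorphisms are classical, but the content of the proposition is that these persist globally over $B$, where the fibers $\sY_b$ acquire singularities and the Fano surfaces $\sF_b$ degenerate. Controlling $R^1\widetilde{p}_*\sO_{\sF}$ and the invariant cycles precisely on the nose (rather than up to a subsheaf supported on the discriminant) requires the full strength of the Hodge-module formalism of \cite[\S 5.1]{DMS}, in particular the computation of the lowest graded piece of the relevant mixed Hodge module and the identification of its de Rham/coherent incarnation with $\Omega_B^1$. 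I expect the integrality statement — that the invariant local system is $\mathrm{H}^{2,2}_{\prim}(X,\bZ)$ rather than merely a $\bQ$-structure — to be the subtle point, and it is here that \cite[Corollary 6.8, Corollary 3.22]{DMS} do the essential work; the role of the \emph{very good} hypothesis is to guarantee that the fibers of $\widetilde{p}$ remain integral over all of $B$, which is exactly what makes $R^1\widetilde{p}_*\sO_{\sF}$ behave as the Lie algebra of the Picard scheme globally and keeps the Hodge-module argument applicable without correction terms.
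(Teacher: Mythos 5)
There is a genuine gap in your argument for the section count. You write the exponential-type sequence with the local system $R^1\widetilde{p}_*\bZ$ as kernel and the Lie algebra $\Omega_B^1$ as quotient, and then conclude that, since ${\rm H}^0(B,\Omega_B^1)=0$, the group of sections equals the monodromy invariants ${\rm H}^0(B,R^1\widetilde{p}_*\bZ)$. The sequence is oriented backwards: the sheaf of analytic sections $\sJ_{\sF/B}$ is the \emph{quotient} of the Lie algebra by the lattice, i.e.\ $0\to H_{\bZ}\to \Omega^1_B\to \sJ_{\sF/B}\to 0$. With the correct orientation, ${\rm H}^0(B,\Omega^1_B)=0$ gives an \emph{injection} ${\rm H}^0(B,\sJ_{\sF/B})\hookrightarrow {\rm H}^1(B,H_{\bZ})$, so the sections live in the \emph{first} cohomology of the lattice, not in its invariants. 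This matters concretely: the local system in question is (isomorphic to) $R^3p_*\bZ_{\sY_U}(1)$, the vanishing cohomology of the hyperplane sections, whose monodromy invariants are zero; your proposed identification would therefore yield the trivial group, not ${\rm H}^{2,2}_{\prim}(X,\bZ)$. The primitive $(2,2)$-classes of $X$ enter through ${\rm H}^1(B,H_{\bZ})$ via the Leray spectral sequence of the universal family, and identifying the image of ${\rm H}^0(B,\sJ_{\sF/B})$ there with ${\rm H}^{2,2}_{\prim}(X,\bZ)$ is exactly the content of \cite[Corollary 3.22]{DMS}, which requires $X$ to be \emph{defect general} in the sense of \cite[Definition 3.8]{DMS} --- a hypothesis satisfied by very good cubic fourfolds and not mentioned in your proposal (you instead attribute the role of the very good hypothesis only to integrality of the Fano fibers).

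A second, smaller omission: you propose to ``apply the same corollaries directly to the Picard-type Hodge module,'' but \cite[Corollary 3.22]{DMS} concerns the intermediate Jacobian sheaf $\sJ(\sM_{\sY})$ of the family of cubic threefolds. The bridge the paper uses is the isomorphism of polarized variations of Hodge structure $R^1\widetilde{p}_*\bZ_{\sF_U}\simeq R^3p_*\bZ_{\sY_U}(1)$, hence of their minimal-extension Hodge modules, combined with \cite[Theorem 5.8]{DMS} identifying $\sJ(\sM_{\sF})$ with the sheaf of analytic sections of $\Pic^0_{\sF/B}$; only then does the corollary transfer to the Picard side. Your treatment of the first assertion (the Lie algebra computation via \cite[Theorem 5.10, Proposition 3.7]{DMS}-type statements and the IVHS/Lagrangian structure) is in the same spirit as the paper and is essentially fine, but the section count needs to be rerouted through ${\rm H}^1$ of the lattice as above.
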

\begin{proof}
    Let $U\subset B$ be the locus over which the fibers of $p\colon\sY\to B$ and $\widetilde{p}\colon \sF\to B$ are smooth.
    Consider the following isomorphic polarized variations of Hodge structure (pvHs) underlying the lattices $R^1\widetilde{p}_*\bbZ_{\sF_U}$ and $R^3p_*\bbZ_{\sY_U}(1)$. Let $\sM_{\sF}$ and $\sM_{\sY}$, respectively, denote the minimal extension (see e.g.\ \cite[\S 5.1 (1)]{DMS}) of this pvHs as Hodge modules. 
    Since the Hodge modules are isomorphic, we have
    \[\sJ(\sM_{\sF})\simeq \sJ(\sM_{\sY}),\]
    where $\sJ(\sM)$ denotes the Jacobian sheaf associated to the Hodge module $\sM$ (see [Definition 5.5, \textit{loc.\ cit.}]). 
    Furthermore, it follows, from \cite[Theorem 5.8]{DMS} that $\sJ(\sM_{\sF})\simeq \sJ_{\sF/B}$, where $\sJ_{\sF/B}$ is the sheaf of analytic sections of $\Pic^0_{\sF/B}$.
    In particular, we obtain an isomorphism of the corresponding sheaf of Lie algebras $R^1\widetilde{p}_*\sO_{\sF}\simeq R^2p_*\Omega_{\sY}^1\simeq \Omega_B^1$ [Theorem 5.10, Proposition 3.7 \textit{loc.\ cit.}]. This proves the first part.

    To compute the exact number of sections follows, as before, first observe that $\sJ_{\sF/B}\simeq~\sJ(\sM_{\sY})$, which follows from the above discussion. 
    Since very good cubic fourfolds are, in particular, defect general in the sense of
\cite[Definition 3.8]{DMS}, we apply [Corollary 3.22, \textit{loc.\ cit}] to conclude that \[{\rm H}^0(B, \sJ_{\sF/B})\simeq {\rm H}^{2,2}_{\prim}(X,\bbZ).\qedhere\] 
\end{proof}

The proof of \cref{prop: Qfactfano} is verbatim to \cref{prop: Qfact}. However, there are a few subtleties arising from considering compactified $\Pic^0$ of a family of surfaces (the Fano surfaces). Many of these were taken care of in \cite{Bottini}, which we point out in the argument below.
\begin{proof}[Proof of \cref{prop: Qfactfano}]
    
     To see that the normalization of $\overline{\Pic}^0_{\sF/B}$ is a terminal symplectic variety, here one needs to use 
     \cite[Proposition 4.8]{Bottini}, to make sure that the smooth group scheme $\Pic^0_{\sF/B}$ (see \cref{rmk:MMPfromPicard}) has complement of codimension 2 in $\overline{\Pic}^0_{\sF/B}$.
     The statement in \textit{loc.\ cit. } holds for any very good cubic fourfold.
    
    For $\bbQ$-factoriality, we need to use \cref{prop:hm}. Let $\sJ_{\sF/B}$ be the sheaf of analytic sections of $\Pic^0_{\sF/B}$.
    As in the proof of \cref{prop: pic rank}, one can argue again that $\overline{\Pic}^0_{\sF/B}$ has at least $\rank ({\rm H}^2(B, \sJ_{\sF/B}))$-many independent Cartier divisors in addition to $f^*\sO(1)$ and an ample line bundle $L$ coming from the projectivity of $\overline{\Pic}^0_{\sF/B}$. 
    Therefore, by \cref{prop:hm} we have $\rank ({\rm H}^{2,2}_{\prim}(X,\bZ))$-many sections of $f$. 
    Using the same argument as \cref{prop: pic rank}, we can produce $\rank ({\rm H}^{2,2}_{\prim}(X,\bZ))+2$-many Cartier divisors on $\overline{\Pic}^0_{\sF/B}$. This allows us to conclude that $\overline{\Pic}^0_{\sF/B}$ is $\bQ$-Cartier by applying the same argument as in the end of \Cref{prop: Qfact}.
\end{proof}

\begin{remark}
    Bottini in \cite[Remark 4.11.]{Bottini} showed that $\overline{\Pic}^0_{\sF/B}$ is a smooth hyperk\"ahler manifold when $X$ is a general cubic fourfold. In particular, at least for the general cubic fourfold, the normalization seems unnecessary. 
\end{remark}

\subsection{The Acknowledgments} We thank Yoonjoo Kim, J\'anos Koll\'ar and Giulia Sacc\`a for helpful discussions, email correspondences and feedback. In addition, we thank Alessio Bottini, Dominique Mattei,  Evgeny Shinder, and Sasha Viktorova for useful feedback. Y.D.\ would like to thank Dominique Mattei and Evgeny Shinder for the journey through several exciting mathematics during preparation of \cite{DMS} which are closely related to this note. We gratefully acknowledge the referee's valuable inputs in improving the exposition.

L.M. was supported in part by an AMS-Simons travel grant.

\bibliographystyle{halpha}
\bibliography{bibliography}
\end{document}